\renewcommand{\title}[1]{
\begin{center} \Large \bf #1 \end{center}
}
\renewcommand{\author}[2]{
 \begin{center} #1  \vspace{3mm} \\
  #2 \\
 \end{center}
\addvspace{\baselineskip}
}
\newtheorem{theorem}{Theorem}[section]
\newtheorem{mainthm}[theorem]{Main Theorem}
\newtheorem{proposition}[theorem]{Proposition}
\newtheorem{corollary}[theorem]{Corollary}
\newtheorem{lemma}[theorem]{Lemma}
\newtheorem{example}[theorem]{Example}
\theoremstyle{definition}
\newtheorem{definition}[theorem]{Definition}
\theoremstyle{remark}
\newtheorem*{rem}{Remark}
\begin{document}
\baselineskip 5mm
\title{(Co)Homology Groups and Categorified Eigenvalue}
\author{${}^1$ Jumpei Gohara ${}^2$ Yuji Hirota ${}^1$ Keisui Ino and~ ${}^1$ Akifumi Sako}
{
${}^1$  Tokyo University of Science,\\ 1-3 Kagurazaka, Shinjuku-ku, Tokyo, 162-8601, Japan\\
${}^2$
Azabu University,\\ 1-17-71 Fuchinobe, Chuo-ku, Sagamihara, Kanagawa, 252-5201, Japan}
\noindent
\vspace{1cm}

\abstract{

We discuss the relationship between (co)homology groups and categorical diagonalization. We consider the category of chain complexes in the category of finitely generated free modules on a commutative ring. For a fixed chain complex with zero maps as an object, a chain map from the object to another chain complex is defined, and the chain map introduce a mapping cone. 
We found that the fixed object is isomorphic to the (co)homology groups of 
the codomain of the chain map if and only if the chain map is injective to the cokernel of differentials of the codomain chain complex and
the mapping cone is homotopy equivalent to zero.
On the other hand, the fixed object is regarded as a categorified eigenvalue of the chain complex in the context of the categorical diagonalization
introduced by B.Elias and M. Hogancamp arXiv:1801.00191v1. 
It is found that 
(co)homology groups are constructed as the eigenvalue of a chain complex.

}


%
%
\section{Introduction}
We give a perspective of a sequence of homology groups by using the categorical diagonalization\cite{diagonalization2,diagonalization1,diagonalization3}. 
We show that categorical sequence of homology groups is given as a categorified eigenvalue of another sequence of chain complexes. We denote the categorified eigenvalue by $\lambda\in ob(\mathscr{A})$ that satisfies homotopy equivalence
\begin{align}\label{ev}
\lambda\otimes V\sim F\otimes V 
\end{align}
for $F\in ob(\mathscr{A})$ and $V\in ob(\mathscr{V})$. Here $\mathscr{V}$ is an additive category and $\mathscr{A}$ is a monoidal homotopy category 
defined in Definition 1.6 in \cite{diagonalization1}
which acts on $\mathscr{V}$. The additive category means that a pre-additive category equipped with a biproduct and a zero object, and the monoidal category means that a category equipped with ``tensor product" (See \cite{limit2} for details). $\lambda$ is a scalar object. The scalar object is an object that behaves symmetrically or commutatively for the ``tensor product" (See Section $6.1$ of \cite{diagonalization1} for details).


For a complex $F \in ob(\mathscr{A})$ and
a scalar object $\lambda \in ob(\mathscr{A})$,
a morphism $\alpha:\lambda \to F$ in $\mathscr{A}$ exists, and if
\begin{align*}
\alpha \otimes id_{V}: \lambda \otimes V \longrightarrow F \otimes V 
\end{align*}
gives homotopy equivalent $\lambda\otimes V\sim F\otimes V~(0\neq V\in ob(\mathscr{V}))$ in $\mathscr{V}$, then $V$ is called an eigenobject with an eigenmap $\alpha$ or simply an eigenobject \cite{diagonalization1,diagonalization2}.
B.Elias and M. Hogancamp showed that
the condition for $V$ to be an eigenobject is expressed using Cone$(\alpha )$ from the following proposition.
\begin{proposition}[\cite{diagonalization1,{diag_book},diagonalization2}]\label{prop1}  
Let $F,\lambda, \mathscr{A},\mathscr{V},\alpha $ be those given above.
Then $V\in ob(\mathscr{V})$ is an eigenobject if and only if {\rm Cone}$(\alpha)\otimes V\sim 0~(V\neq 0)$.
\end{proposition}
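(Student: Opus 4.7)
The plan is to reduce the statement to a standard fact about mapping cones in a homotopy category: a chain map is a homotopy equivalence if and only if its cone is contractible (i.e. homotopy equivalent to $0$). Once that is in hand, the claim becomes a matter of showing that the cone construction commutes with tensoring on the right by $V$.

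First I would observe that since the action of $\mathscr{A}$ on $\mathscr{V}$ is by additive (in fact monoidal) functors, the functor $(-) \otimes V : \mathscr{A} \to \mathscr{V}$ preserves shifts and direct sums, and therefore commutes with mapping cones up to canonical isomorphism:
\begin{align*}
\mathrm{Cone}(\alpha \otimes \mathrm{id}_V) \;\cong\; \mathrm{Cone}(\alpha) \otimes V.
\end{align*}
This is the technical input that makes the argument work; verifying it requires only unpacking the componentwise definition of the cone and using that $\otimes V$ preserves the underlying graded module structure and the differentials.

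Next, I would invoke the standard lemma (as in Definition/Proposition~1.6 of \cite{diagonalization1}, or any text on triangulated categories): for a morphism $f : X \to Y$ in a homotopy category of chain complexes, $f$ is a homotopy equivalence if and only if $\mathrm{Cone}(f) \sim 0$. Applied to $f = \alpha \otimes \mathrm{id}_V : \lambda \otimes V \to F \otimes V$, this says that $\alpha \otimes \mathrm{id}_V$ induces the homotopy equivalence $\lambda \otimes V \sim F \otimes V$ characterizing an eigenobject if and only if $\mathrm{Cone}(\alpha \otimes \mathrm{id}_V) \sim 0$.

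Combining the two steps gives the desired equivalence: $V$ is an eigenobject with eigenmap $\alpha$ iff $\mathrm{Cone}(\alpha \otimes \mathrm{id}_V) \sim 0$ iff $\mathrm{Cone}(\alpha) \otimes V \sim 0$. The main obstacle I anticipate is not the logical chain itself but the bookkeeping in the first step: one must check carefully that the isomorphism $\mathrm{Cone}(\alpha \otimes \mathrm{id}_V) \cong \mathrm{Cone}(\alpha) \otimes V$ holds on the nose (or at least up to canonical homotopy equivalence) with the correct sign conventions for the differential of the cone. Once that is settled, the rest is formal.
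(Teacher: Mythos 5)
The paper does not prove this proposition; it is stated with a citation to Elias--Hogancamp and related references, so there is no in-paper argument to compare against. Your proof is correct and is the standard argument one finds in those sources: $(-)\otimes V$ is an additive functor compatible with shifts, so it carries $\mathrm{Cone}(\alpha)$ to $\mathrm{Cone}(\alpha\otimes\mathrm{id}_V)$, and then one applies the triangulated-category fact that a morphism in the homotopy category is an isomorphism if and only if its cone is nullhomotopic. One small point worth stating explicitly: the hypothesis that $\mathscr{A}$ acts on $\mathscr{V}$ as a \emph{monoidal homotopy category} (Definition~1.6 of the cited paper) is what guarantees $(-)\otimes V$ is triangulated (preserves cones), which is exactly the ``technical input'' you flag; and the paper's definition of eigenobject requires the specific map $\alpha\otimes\mathrm{id}_V$ to be the homotopy equivalence, not merely that the two objects be abstractly equivalent --- your second step addresses precisely this, so no gap there.
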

\bigskip


We describe the relation between (co)homology groups and categorified eigenvalues.
The goal of this article is to show the following theorem.

\begin{mainthm}\label{thm_main}
Let $R$ be a commutative ring with characteristic $0$ and ${}_R\mathcal{M}$ is the category of finitely generated free modules over $R$. 
For the category $\mathcal{C}({}_R\mathcal{M})$ of chain complexes in ${}_R\mathcal{M}$, 
we suppose that
an object $F=(F_\bullet,\,\tilde{d}_\bullet^F)$ and a scalar object $\lambda=(\lambda_\bullet,0)$ in $\mathcal{C}({}_R\mathcal{M})$ are given as
\begin{equation*}
F= ( \cdots \rightarrow  F_{n-1} \xrightarrow{\tilde{d}_{n-1}^F} F_n  \xrightarrow{\tilde{d}_n^F} F_{n+1}  \rightarrow \cdots )
\end{equation*}
and
\[
\lambda= (\cdots \rightarrow  \lambda_{n-1} \xrightarrow{~0~} \lambda_n \xrightarrow{~0~} \lambda_{n+1} \rightarrow \cdots).
\] 
Let
$G_n$ be the submodule of $F_n$ given by $F_n=G_n\oplus {\rm Im} \tilde{d}^F_{n-1}$. 
Let $\alpha=\{\alpha_n:\lambda_n \to F_n\}$ be a morphism in $\mathcal{C}({}_R\mathcal{M})(\lambda,\,F)$. 
Then, $\lambda_n$ and the cohomology group $H^n(F)$ with coefficients in $R$ are isomorphic:
\begin{align*}
\lambda _n \simeq \frac{\ker \tilde{d}_{n}^F}{{\rm Im}~\tilde{d}_{n-1}^F}=H^n(F)
\end{align*}
for all $n$,
and $\alpha_n$ is an injection into $G_n$,
if and only if 
$\lambda \otimes R \sim F \otimes R$, that is to say 
$R$ and $\lambda$ are
an eigenobject and
 a categorified eigenvalue, respectively. 

\end{mainthm}
\section{Preliminaries}

We give a quick review of category theory needed for the subsequent discussion~(see for example \cite{homological} and references therein). 
Given a category $\mathscr{C}$, ${ob}( \mathscr{C})$ denotes objects in $\mathscr{C}$, and $\mathscr{C}(X,Y)$ does morphisms from $X$ to $Y$ in $\mathscr{C}$. 

Let $\mathscr{B}$ be an additive category. 
Recall that a chain complex $( X_\bullet , d )$ in $\mathscr{B}$ is a sequence of objects $X_\bullet$ in $\mathscr{B}$ and morphisms $d_n\in \mathscr{B}(X_n,X_{n+1})$
\[
X=(\cdots \rightarrow  X_{n-1} \xrightarrow{d_{n-1}} X_n  \xrightarrow{d_n} X_{n+1}  \rightarrow \cdots)
\]
with $d_{n+1}\circ d_n = 0$ for all $n\in \mathbb{Z}$. They form a category whose objects are all chain complexes in $\mathscr{B}$ and whose morphisms are all chain maps. 
We denoted the category by $\mathcal{C}(\mathscr{B})$. 
\medskip 

\begin{definition}\label{homotopy}
For an additive category $\mathscr{B}$ and its category of chain complexes $\mathcal{C}(\mathscr{B})$ , 
let $X=\{X_i,d_X^i\}$ and $Y=\{Y_i,d_Y^i\}$ be (co)chain complexes in $\mathcal{C}(\mathscr{B})$ and $f$ and $g$ be morphisms of (co)chain complexes in 
$\mathcal{C}(\mathscr{B})(X,Y)$. 
A homotopy $\Psi$ from $f$ to $g$ is a sequence of morphisms $\Psi =\{\Psi^i:X_i\to Y_{i-1}\}$ that satisfies 
\begin{align*}
f^i-g^i=d^{i-1}_Y\circ \Psi^i+\Psi^{i+1}\circ d^i_X
\end{align*}
for all $i\in \mathbb{Z}$.
\begin{align*}
\xymatrix{
 \cdots \ar[r] & X_i \ar[r]^{d^{i}_{X}} \ar[ld]_-{\Psi^{i}} \ar[d]^(0.4){f^i - g^i } & X_{i+1} \ar[r] \ar[ld]^{\Psi^{i+1}} & \\
 Y_{i-1} \ar[r]_{d^{i-1}_{Y}} & Y_i \ar[r] & \cdots
 }
\end{align*} 
If there is a homotopy from $f$ to $g$, it is represented by $f\sim g$ or $f\underset{\Psi}{\sim} g$.

\end{definition}
For this homotopy, homotopy equivalent is defined as follows.

\begin{definition}\label{homotopy equivalent}
If (co)chain complexes $X$ and $Y\in {\rm ob}(\mathcal{C}(\mathscr{B}))$ are isomorphic in a homotopy category $K(\mathscr{B})$, 
then $X$ and $Y$ are said to be homotopy equivalent, denoted by $X\sim Y$. That is, there exist homotopies $\Psi$ and $\Phi$ for 
$f\in \mathcal{C}(\mathscr{B})(X,Y)$ and $g\in \mathcal{C}(\mathscr{B})(Y,X)$ which satisfy the condition 

\begin{align*}
g\circ f\underset{\Psi}{\sim} id_X,\quad f\circ g\underset{\Phi}{\sim}id_Y.
\end{align*}

\end{definition}
\medskip 

We shall often use the following notation to express a morphism in an additive category $\mathscr{B}$. 
Let $X=X_1\oplus X_2\oplus X_3$ and $Y_1\oplus Y_2\oplus Y_3$ be two direct sums in $\mathscr{B}$. Given a morphism $f\in \mathscr{B}(X,Y)$, 
we define a morphism $f_{ij} \in \mathscr{B}(X_j,Y_i)$ as a composition
\[
f_{ij}:= (X_j\hookrightarrow X\overset{f}{\longrightarrow} Y\longrightarrow Y_i )\qquad (i,j=1,2,3)
\]
of the natural injection $X_j\hookrightarrow X$ and the projection to the $i$-th component $Y\to Y_i$. Using those $f_{ij}$, we shall write $f$ 
in the matrix form as 
\[
 f = \left[ \begin{array}{@{\,}ccc@{\,}} f_{11}&f_{12}&f_{13} \\ f_{21}&f_{22}&f_{23} \\ f_{31}&f_{32}&f_{33} \end{array} \right].
\]
\medskip 

See Chapter $3$ of $\cite{homological}$ for details on these definitions.
\bigskip
\par

In this paper, the symbol $\sim$ 
and $\simeq$ are used to mean homotopy equivalent
and isomorphic, respectively.


\section{Homology Groups as Eigenvalues}\label{sect3}

Let $R$ be a commutative ring, and let ${}_R\mathcal{M}$ denote the category of finitely generated free modules over $R$. 
That is, ${}_R\mathcal{M}$ has finitely generated free $R$-modules as objects, and $R$-homomorphisms as morphisms. 
As is well-known, ${}_R\mathcal{M}$ is a monoidal category. 
\medskip 

Let $F=(F_\bullet, \tilde{d}^F_{n})$ and $\lambda=(\lambda_\bullet,0)$ be objects in $\mathcal{C}({}_R\mathcal{M})$ given by 
\begin{equation*}
F= ( \cdots \rightarrow  F_{n-1} \xrightarrow{\tilde{d}_{n-1}^F} F_n  \xrightarrow{\tilde{d}_n^F} F_{n+1}  \rightarrow \cdots )
\end{equation*}
and
\[
\lambda= (\cdots \rightarrow  \lambda_{n-1} \xrightarrow{~0~} \lambda_n \xrightarrow{~0~} \lambda_{n+1} \rightarrow \cdots),
\]
respectively. 
Here, $0$ stands for a constant map assigning the additive identity $\boldsymbol{0}$ to all elements in each component. 
At each integer $n$, one gets the direct decomposition of $F_n$ by 
\begin{equation}\label{eqn_decomposition}
F_n= G_n \oplus {\rm Im}\,\tilde{d}_{n-1}^F, 
\end{equation}
where $G_n$ is a subspace of $F_n$ isomorphic to the cokernel of $\tilde{d}_{n-1}^F$, ${\rm Coker}~\tilde{d}_{n-1}^F=F_n/{\rm Im}\,\tilde{d}_{n-1}^F$. 

We let $\alpha$ be a chain map from $\lambda$ to $F$ which maps each $\lambda_n$ into $G_n\subset F_n$, 
and denote by ${\rm Cone}(\alpha)$ the mapping cone of $\alpha$. 
Namely, $\alpha$ is a family of $R$-homomorphisms $\alpha=\{\alpha_n:\lambda_n \to F_n\}$ with 
\begin{equation}\label{eqn_condition of f}
 \tilde{d}^F_n\circ \alpha_n=0 \quad (\forall n\in \mathbb{Z})
 \qquad\text{and}\qquad 
 {\rm Im}\,\alpha_n\subset G_n
\end{equation}
and ${\rm Cone}(\alpha)$ is a chain complex 
\[
{\rm Cone}(\alpha)
:=
\lambda[1]\oplus F
:=(\cdots \rightarrow  \lambda_{n}\oplus F_{n-1} \xrightarrow{d^Z_{n-1}} 
  \lambda_{n+1}\oplus F_n  \xrightarrow{d^Z_n} \lambda_{n+2}\oplus F_{n+1} \rightarrow \cdots)
\]
with $R$-homomorphisms $\{d_n^Z:\lambda_{n+1}\oplus F_n\to \lambda_{n+2}\oplus F_{n+1}\}$ by 
\[
d_n^Z(\boldsymbol{a}\oplus \boldsymbol{x}) := \boldsymbol{0}\oplus \bigl(\alpha_{n+1}(\boldsymbol{a})+ d_n^F(\boldsymbol{x})\bigr)
\quad (\boldsymbol{a}\oplus \boldsymbol{x}\in \lambda_{n+1}\oplus F_n).
\]
By using the decomposition (\ref{eqn_decomposition}), ${d_n^Z}$ can be rewritten in the matrix form as 
\[
d_n^Z = \left[
\begin{array}{@{\,}ccc@{\,}}0 & 0 & 0 \\ \alpha_{n+1} & 0 & 0 \\ 0 & \delta_n & 0 \end{array}
\right]
\ \ 
:
\ \ 
\left[
\begin{array}{c}
\lambda_{n+1} \\
G_n\\
{\rm Im} \tilde{d}_{n-1}^F
\end{array}
\right]
\rightarrow 
\left[ 
\begin{array}{c}
\lambda_{n+2} \\
G_{n+1}\\
{\rm Im} \tilde{d}_{n}^F
\end{array}
\right]
.
\]
Here, $\delta_n$ denotes a $R$-homomorphism $\tilde{d}_{n}^F$ restricted to $G_n$, 
\[
\delta_n:= \tilde{d}_{n}^F|_{G_n}: G_n \longrightarrow {\rm Im}\,\tilde{d}_{n}^F \subset F_{n+1} .
\]
One can easily check that 
\begin{equation}\label{eqn_kernel}
\ker\,\tilde{d}_n^F = \ker\,\delta_n \oplus {\rm Im}\,\tilde{d}^F_{n-1}
\end{equation}
for each $n$. 
From (\ref{eqn_kernel}) and the first condition in (\ref{eqn_condition of f}), it follows that 
\begin{equation}\label{eqn_subset}
{\rm Im}\,\alpha_n\subset \ker \delta_n. 
\end{equation} 
\medskip 

\begin{lemma}\label{lem1}
If ${\rm Cone}(\alpha)\sim 0$, then 
each $\alpha_n$ is injective, and
$\lambda_n$ and the $n$-th (co)homology groups are isomorphic each other for each $n${\rm :}
\begin{align*}
\lambda _n\simeq H^n(F):=\frac{\ker \tilde{d}_{n}^F}{{\rm Im}\,\tilde{d}_{n-1}^F}.
\end{align*}
\end{lemma}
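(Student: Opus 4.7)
The plan is to exploit the fact that homotopy equivalence to the zero complex (contractibility) forces the homology of $\mathrm{Cone}(\alpha)$ to vanish in every degree, and then to read off the two desired conclusions by computing that homology explicitly from the matrix form of $d_n^Z$. The three data in the direct sum decomposition of each $F_n$ do all the bookkeeping: $\lambda_{n+1}$ and $G_n$ couple via $\alpha_{n+1}$, while $G_n$ and $\mathrm{Im}\,\tilde{d}^F_{n-1}$ couple via $\delta_n$, and the three resulting summands of $d_n^Z$ are independent.

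Concretely, the first step would be to inspect $\ker d_n^Z$ and $\mathrm{Im}\,d_{n-1}^Z$ inside $\lambda_{n+1}\oplus G_n\oplus \mathrm{Im}\,\tilde{d}^F_{n-1}$. Reading off the matrix form,
\begin{align*}
\ker d_n^Z &= \ker\alpha_{n+1}\,\oplus\, \ker\delta_n\,\oplus\, \mathrm{Im}\,\tilde{d}^F_{n-1},\\
\mathrm{Im}\,d_{n-1}^Z &= 0\,\oplus\, \mathrm{Im}\,\alpha_n\,\oplus\, \mathrm{Im}\,\delta_{n-1}.
\end{align*}
Since $F_{n-1}=G_{n-1}\oplus\mathrm{Im}\,\tilde{d}^F_{n-2}$ and $\tilde{d}^F_{n-1}$ kills the second summand, one has $\mathrm{Im}\,\delta_{n-1}=\mathrm{Im}\,\tilde{d}^F_{n-1}$; and by (\ref{eqn_subset}), $\mathrm{Im}\,\alpha_n\subset\ker\delta_n$. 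Hence the quotient splits cleanly into
\begin{equation*}
H^n(\mathrm{Cone}(\alpha))\;\simeq\;\ker\alpha_{n+1}\,\oplus\,\bigl(\ker\delta_n/\mathrm{Im}\,\alpha_n\bigr).
\end{equation*}

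The second step uses the hypothesis: if $\mathrm{Cone}(\alpha)\sim 0$, then the identity of $\mathrm{Cone}(\alpha)$ is null-homotopic, so the complex is acyclic and the left-hand side vanishes for every $n$. Vanishing of the first summand in every degree gives $\ker\alpha_n=0$ for all $n$, i.e.\ each $\alpha_n$ is injective. Vanishing of the second summand gives $\mathrm{Im}\,\alpha_n=\ker\delta_n$; together with injectivity this says $\alpha_n$ restricts to an isomorphism $\lambda_n\xrightarrow{\sim}\ker\delta_n$. Finally, (\ref{eqn_kernel}) gives $\ker\delta_n\simeq \ker\tilde{d}^F_n/\mathrm{Im}\,\tilde{d}^F_{n-1}=H^n(F)$, yielding $\lambda_n\simeq H^n(F)$ as claimed.

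The only mildly delicate point — the ``main obstacle'' such as it is — is making sure that the three summands in $\ker d_n^Z$ and $\mathrm{Im}\,d_{n-1}^Z$ really do align on the nose, so that the quotient is a clean direct sum rather than a mess of extensions; this in turn depends on $\mathrm{Im}\,\delta_{n-1}=\mathrm{Im}\,\tilde{d}^F_{n-1}$ and on $\mathrm{Im}\,\alpha_n\subset\ker\delta_n$, both of which are immediate from the hypotheses $F_n=G_n\oplus\mathrm{Im}\,\tilde{d}^F_{n-1}$ and $\tilde{d}^F_n\circ\alpha_n=0$. Note also that the argument does not use characteristic zero or the free module hypothesis beyond the implication \emph{contractible $\Rightarrow$ acyclic}, which holds in any additive category.
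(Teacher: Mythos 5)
Your proof is correct, and it takes a genuinely different route from the paper. The paper unpacks the homotopy $\Psi$ from $0$ to $\mathrm{id}_Z$ in matrix form, reads off the three diagonal identities $\psi_{12}^{n+1}\circ\alpha_{n+1}=-\mathrm{id}$, $\alpha_n\circ\psi_{12}^n+\psi_{23}^{n+1}\circ\delta_n=-\mathrm{id}$, $\delta_{n-1}\circ\psi_{23}^n=-\mathrm{id}$, and then applies the rank-nullity theorem twice to deduce $\ker\alpha_n=0$ and $\mathrm{Im}\,\alpha_n=\ker\delta_n$. You instead never touch the homotopy at all: you compute $\ker d_n^Z$ and $\mathrm{Im}\,d_{n-1}^Z$ directly from the upper-triangular matrix of $d_n^Z$, observe that the three summands align component-by-component (because $\mathrm{Im}\,\delta_{n-1}=\mathrm{Im}\,\tilde d^F_{n-1}$ and $\mathrm{Im}\,\alpha_n\subset\ker\delta_n$), so that $H^n(\mathrm{Cone}(\alpha))\simeq\ker\alpha_{n+1}\oplus\bigl(\ker\delta_n/\mathrm{Im}\,\alpha_n\bigr)$, and then invoke the standard fact that a contractible complex is acyclic. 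This buys you two things: it bypasses the rank arithmetic (which, as the paper states it, implicitly relies on a well-behaved rank function for free modules and hence on hypotheses like an integral domain or a field), and it is conceptually tighter, reducing the lemma to the observation that the cone's cohomology is literally a direct sum of the two obstructions $\ker\alpha_{n+1}$ and $\ker\delta_n/\mathrm{Im}\,\alpha_n$. One small imprecision in your closing remark: ``contractible $\Rightarrow$ acyclic'' is not a statement in a general additive category, since homology is not defined there; what you actually use is that $\mathcal{C}({}_R\mathcal{M})$ embeds in the abelian category of all $R$-modules, where the cone's cohomology lives, and the implication holds there. That said, it is worth noting that the paper's own proof also essentially contains a homotopy-free shortcut: the identity $\psi_{12}^{n+1}\circ\alpha_{n+1}=-\mathrm{id}$ directly exhibits a left inverse for $\alpha_{n+1}$, giving injectivity without any rank count, and $\alpha_n\circ\psi_{12}^n+\psi_{23}^{n+1}\circ\delta_n=-\mathrm{id}$ restricted to $\ker\delta_n$ gives $\ker\delta_n\subset\mathrm{Im}\,\alpha_n$ directly; so the two proofs are closer in spirit than their presentations suggest, with yours being the cleaner formulation.
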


\begin{proof}
By the assumption $Z={\rm Cone}(\alpha)\sim 0$, there exists a homotopy $\{\Psi^n:Z_n\to Z_{n-1}\}$ from $0$ to ${id}_{Z}$
as follows: 
\begin{align*}
\xymatrix{
 \cdots \ar[r] &  \lambda_n\oplus F_{n-1} \ar[r]^{d_{n-1}^Z} &  \lambda_{n+1}\oplus F_n \ar[r]^-{d_{n}^Z}  \ar[r] \ar[ldd]^{\Psi^{n}} \ar@{-}[d]& \lambda_{n+2}\oplus  F_{n+1}  \ar[ldd]^{\Psi^{n+1}}  \ar[r] & \cdots \\
 \cdots \ar[r] & 0 \ar[r] & 0 \ar[r]  \ar[d]^{id} & 0 \ar[r]  & \cdots \\
\cdots \ar[r] &  \lambda_n \oplus F_{n-1} \ar[r]^{d_{n-1}^Z} &  \lambda_{n+1}\oplus F_{n}  \ar[r]^-{d_{n}^Z}  \ar[r]  &   \lambda_{n+2}\oplus F_{n+1}  \ar[r] & \cdots \\
  }
\end{align*}
Let us express $\Psi^n:\lambda_{n+1}\oplus \bigl(G_n\oplus {\rm Im}\,\tilde{d}_{n-1}^F\bigr)
\rightarrow 
\lambda_{n}\oplus \bigl(G_{n-1}\oplus {\rm Im}\,\tilde{d}_{n-2}^F\bigr)
$ in the matrix form as
\begin{align*}
  \Psi^n=\left[\begin{array}{@{\,}ccc@{\,}}
  \psi_{11}^n&\psi_{12}^n&\psi_{13}^n\\
  \psi_{21}^n&\psi_{22}^n&\psi_{23}^n\\
  \psi_{31}^n&\psi_{32}^n&\psi_{33}^n
  \end{array}\right]
\ \ 
:
\ \ 
\left[ 
\begin{array}{c}
\lambda_{n+1} \\
G_n\\
{\rm Im} \tilde{d}_{n-1}^F
\end{array}
\right]
\rightarrow 
\left[ 
\begin{array}{c}
\lambda_{n} \\
G_{n-1}\\
{\rm Im} \tilde{d}_{n-2}^F
\end{array}
\right] .
\end{align*} 
Here, $\{\psi_{\bullet 1}^n\},\,\{\psi_{\bullet 2}^n\}$ and $\{\psi_{\bullet 3}^n\}$ stand for homomorphisms defined on $\lambda_{n+1},\,G_n$ and 
${\rm Im}\,\tilde{d}_{n-1}^F$ respectively. 
From Definition $\ref{homotopy equivalent}$, $\Psi$ satisfies
\begin{align*}
  -id_Z&=d_{n-1}^Z\circ\Psi^{n} + \Psi^{n+1}\circ d_{n}^Z \notag \\
  &= \left[
  \begin{array}{@{\,}ccc@{\,}}
  0&0&0\\
 \alpha_{n}\circ\psi_{11}^n & \alpha_{n}\circ\psi_{12}^n & \alpha_{n}\circ\psi_{13}^n \\
  \delta_{n-1}\circ\psi_{21}^n & \delta_{n-1}\circ\psi_{22}^n& \delta_{n-1}\circ\psi_{23}^n 
  \end{array}
  \right]
  +\left[
  \begin{array}{@{\,}ccc@{\,}}
   \psi_{12}^{n+1}\circ \alpha_{n+1}  & \psi_{13}^{n+1}\circ \delta_{n}&0 \\
   \psi_{22}^{n+1}\circ \alpha_{n+1}  & \psi_{23}^{n+1}\circ \delta_{n}&0 \\
   \psi_{32}^{n+1}\circ \alpha_{n+1}  & \psi_{33}^{n+1}\circ \delta_{n}&0
  \end{array}
  \right].
  \end{align*}
From the diagonal entries in the above equation, we have
\begin{align}
\psi_{12}^{n+1}\circ \alpha_{n+1}&=-id_{\lambda_{n+1}}, \label{eq.1}\\
 \alpha_{n}\circ \psi_{12}^n+ \psi_{23}^{n+1}\circ \delta_{n}&=-id_{G_n},\label{eq.2}\\
 \delta_{n-1}\circ\psi_{23}^n &=-id_{{\rm Im}\,\tilde{d}^F_{n-1}}.\label{eq.3}
\end{align}
It follows from $(\ref{eq.1})$ that  
\begin{equation}\label{eqn_rank calculation1}
{\rm dim}\,\lambda_{n+1} = {\rm rank}\,(\psi_{12}^{n+1}\circ \alpha_{n+1}) \leqq {\rm rank}\,\alpha_{n+1}.
\end{equation}
On the other hand, by the rank nullity theorem for $\alpha_{n+1}:\lambda_{n+1}\to F_{n+1}$, we have 
\begin{equation}\label{eqn_rank calculation2}
{\rm rank}\,\alpha_{n+1} = {\rm dim}\,\lambda_{n+1} - {\rm null}\,\alpha_{n+1}. 
\end{equation}
From (\ref{eqn_rank calculation1}) and (\ref{eqn_rank calculation2}), we get ${\rm null}\,\alpha_{n+1}=0$ to find that $\lambda_n$ is isomorphic 
to ${\rm Im}\,\alpha_n$ at each $n$. 
\medskip 

It follows from (\ref{eq.2}) that 
\begin{equation}\label{eqn_rank calculation3}
{\rm rank}\,G_n = {\rm rank}\,(\alpha_{n}\circ \psi_{12}^n+ \psi_{23}^{n+1}\circ \delta_{n})
\leqq {\rm rank}\,\alpha_n + {\rm rank}\,\delta_n.
\end{equation}
By applying the rank nullity theorem to $\delta_n:G_n \to {\rm Im}\,\tilde{d}_{n}^F$, 
\begin{equation}\label{eqn_rank calculation4}
{\rm rank}\,G_n = {\rm null}\,\delta_n + {\rm rank}\,\delta_n.
\end{equation}
From (\ref{eqn_rank calculation3}), (\ref{eqn_rank calculation4}) and (\ref{eqn_subset}), it turns out that ${\rm Im}\,\alpha_n = \ker \delta_n$. 
\medskip 

The condition (\ref{eqn_kernel}) yields to that 
\[
H^n(F)=\ker\tilde{d}^F_n/{\rm Im}\,\tilde{d}^F_{n-1}\simeq \ker\delta_n.
\]
by the fundamental homomorphism theorem. Consequently, we have 
\[
\lambda_n \simeq {\rm Im}\,\alpha_n = \ker \delta_n \simeq H^n(F), 
\]
which shows the proposition. 
\end{proof}
\bigskip

We find that ${\rm Cone}(\alpha)\sim 0$ is sufficient for each $\lambda_n$ to be isomorphic to their (co)homology group $H^n(F)$. 
Now, what conditions are needed for ${\rm Cone}(\alpha)\sim 0$ ? The answer to the query is given as follows:
\medskip 

\begin{lemma}\label{lem2}
Assume that each $\alpha_n$ is an injection into $G_n\subset F_n$. 
If each $\lambda_n$ is isomorphic to $H^n(F)$, 
then the mapping cone of $\alpha$ is null-homotopic, ${\rm Cone}(\alpha)\sim 0$. 
\end{lemma}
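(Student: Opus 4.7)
The plan is to construct an explicit null-homotopy $\Psi=\{\Psi^n:Z_n\to Z_{n-1}\}$ of $Z={\rm Cone}(\alpha)$ satisfying $d_{n-1}^Z\circ\Psi^n+\Psi^{n+1}\circ d_n^Z=-{\rm id}_{Z_n}$. The nine block-equations that must be solved are the three diagonal identities (\ref{eq.1})--(\ref{eq.3}) already appearing in the proof of Lemma \ref{lem1}, together with six off-diagonal vanishing conditions. The strategy is to set most matrix entries of $\Psi^n$ equal to zero and put all the content into $\psi_{12}^n$ and $\psi_{23}^n$, so that the off-diagonal equations are trivially satisfied and the diagonal ones reduce to identities.

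First I translate the hypotheses into concrete data. Since $\alpha_n$ is injective into $G_n$ and ${\rm Im}\,\alpha_n\subset\ker\delta_n$ by (\ref{eqn_subset}), while $\lambda_n$ and $\ker\delta_n\simeq H^n(F)$ have the same rank by assumption, $\alpha_n$ restricts to an isomorphism $\alpha_n:\lambda_n\xrightarrow{\sim}\ker\delta_n$. I then choose a splitting $G_n=\ker\delta_n\oplus G_n''$ so that $\delta_n|_{G_n''}:G_n''\to{\rm Im}\,\tilde{d}_n^F$ becomes an isomorphism; this yields a projection $p_n:G_n\to\ker\delta_n$ and a section $\sigma_n:{\rm Im}\,\tilde{d}_n^F\to G_n''\subset G_n$ of $\delta_n$.

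Next I set
\[
\psi_{12}^n:=-\alpha_n^{-1}\circ p_n,\qquad \psi_{23}^n:=-\sigma_{n-1},
\]
and $\psi_{ij}^n:=0$ for all other pairs $(i,j)$, and verify the homotopy relation by a direct $3\times 3$ block computation. Equation (\ref{eq.1}) holds because $\alpha_{n+1}$ lands in $\ker\delta_{n+1}$, so $p_{n+1}\circ\alpha_{n+1}=\alpha_{n+1}$ and $\psi_{12}^{n+1}\circ\alpha_{n+1}=-{\rm id}_{\lambda_{n+1}}$. Equation (\ref{eq.2}) is checked on the two summands of $G_n=\ker\delta_n\oplus G_n''$: on $\ker\delta_n$ only the first term $-\alpha_n\alpha_n^{-1}p_n$ contributes, on $G_n''$ only the second term $-\sigma_n\delta_n$ does, and each acts as $-{\rm id}$ on its respective summand. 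Equation (\ref{eq.3}) is immediate from $\delta_{n-1}\circ\sigma_{n-1}={\rm id}$. Every off-diagonal block identity contains a factor that has been set to zero.

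The main obstacle is legitimising the splittings: the existence of the complement $G_n''$ in $G_n$ and of the section $\sigma_n$ of $\delta_n$ requires ${\rm Im}\,\tilde{d}_n^F$ to be a projective (equivalently, under the paper's standing assumption of finitely generated free modules, free) $R$-module. This is precisely the same sort of splitting already built into the decomposition (\ref{eqn_decomposition}), so I will invoke it in parallel with the setup already fixed in Section \ref{sect3}. Once this is granted, the explicit $\Psi^n$ above is manifestly a null-homotopy of ${\rm id}_Z$, showing ${\rm Cone}(\alpha)\sim 0$.
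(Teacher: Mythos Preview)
Your proposal is correct and follows essentially the same approach as the paper: both construct the null-homotopy $\Phi^n$ (your $\Psi^n$) with only the $(1,2)$ and $(2,3)$ block entries nonzero, using the splitting $G_n=\ker\delta_n\oplus K_n$ (your $G_n''$) and setting $\varphi_1^n=-\alpha_n^{-1}\circ p_n$, $\varphi_2^n=-(\delta_{n-1}|_{K_{n-1}})^{-1}$ exactly as you do. Your write-up is in fact slightly more explicit than the paper's in checking the off-diagonal vanishing conditions and in flagging the projectivity issue behind the splittings.
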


\begin{proof}
Since $\lambda_n\simeq H^n(F)$, one has $\lambda_n\simeq \ker\delta_n$ at each $n$. Therefore, $\alpha_n$ are thought of as isomorphisms 
$\alpha_n:\ker\delta_n\to \ker\delta_n$ by the assumption. 
$\ker\delta_n$ is a finitely generated free submodule of $G_n$. So, $G_n$ can be decomposed to two submodules $\ker\delta_n$ and some $K_n$ as 
\[
G_n = K_n\oplus \ker\delta_n
\]
at each $n$. 
Remark that each map $\delta_n$ restricted to $K_n$ is an isomorphism between $K_n$ and ${\rm Im}\,\tilde{d}^F_{n}$.

Define two maps $\varphi^n_{1}:G_n \to \lambda_n \simeq \ker\delta_n$ and $\varphi^n_{2}:{\rm Im}\,\tilde{d}^F_{n-1} \to G_{n-1}$ as 
\[
\varphi^n_{1}(\boldsymbol{x}) = 
\begin{cases}
\, -\alpha_n^{-1}(\boldsymbol{x})&\quad (\text{iff $\boldsymbol{x}\in \ker\delta_n$}) \\
\, \boldsymbol{0}&\quad (\text{iff $\boldsymbol{x}\in K_n$})
\end{cases},  
\]
and
\[
\varphi^n_{2}(\boldsymbol{x}) := -(\delta_{n-1}|_{K_{n-1}})^{-1}(\boldsymbol{x})\quad \text{for any $\boldsymbol{x}\in {\rm Im}\,\tilde{d}^F_{n-1}$}
\]
respectively. 

Using $\varphi^n_{1}$ and $\varphi^n_{2}$, we construct a family of morphisms $\{\Phi^n:\lambda_{n+1}\oplus F_n\to \lambda_n\oplus F_{n-1}\}_n$ by
\begin{align}\label{eq6}
  \Phi^n=\left[
  \begin{array}{@{\,}ccc@{\,}}
   0& \varphi_{1}^n & 0 \\
   0& 0 & \varphi_{2}^n \\
   0& 0 & 0 
  \end{array}
  \right]
\ \ 
:
\ \ 
\left[ 
\begin{array}{c}
\lambda_{n+1} \\
G_n\\
{\rm Im} \tilde{d}_{n-1}^F
\end{array}
\right]
\rightarrow 
\left[ 
\begin{array}{c}
\lambda_{n} \\
G_{n-1}\\
{\rm Im} \tilde{d}_{n-2}^F
\end{array}
\right] 
.
\end{align}
Then, 
\begin{align*}
&d_{n-1}^Z\circ \Phi^{n} + \Phi^{n+1}\circ d_{n}^Z \\
=&\ \left[\begin{array}{@{\,}ccc@{\,}} 0&0&0 \\ \alpha_n&0&0 \\ 0&\delta_{n-1}&0 \end{array}\right]
  \left[\begin{array}{@{\,}ccc@{\,}} 0& \varphi_{1}^n & 0 \\ 0& 0 & \varphi_{2}^n \\ 0& 0 & 0 \end{array}\right] + 
 \left[\begin{array}{@{\,}ccc@{\,}} 0& \varphi_{1}^{n+1} & 0 \\ 0& 0 & \varphi_{2}^{n+1} \\ 0& 0 & 0 \end{array}\right]
 \left[\begin{array}{@{\,}ccc@{\,}} 0&0&0 \\ \alpha_{n+1}&0&0 \\ 0&\delta_{n}&0 \end{array}\right]\\
=& \left[\begin{array}{@{\,}ccc@{\,}} \varphi_1^{n+1}\circ \alpha_{n+1}&0&0 \\ 0&\alpha_n\circ\varphi_1^n + \varphi_2^{n+1}\circ \delta_n&0 \\ 
  0&0&\delta_{n-1}\circ \varphi_2^n\end{array}\right].
\end{align*}

We calculate $(\varphi_1^{n+1}\circ \alpha_{n+1})(\boldsymbol{a}),\,(\alpha_n\circ\varphi_1^n + \varphi_2^{n+1}\circ \delta_n)(\boldsymbol{b})$ and 
$(\delta_{n-1}\circ \varphi_2^n)(\boldsymbol{c})$ for any 
$\boldsymbol{a}\oplus \boldsymbol{b}\oplus \boldsymbol{c}\in \lambda_{n+1}\oplus G_n\oplus {\rm Im}\,\tilde{d}^F_{n-1}$ to show that 
$d_{n-1}^Z \circ \Psi^{n} + \Psi^{n+1}\circ d_{n}^Z=-id_Z$.

It is easy to check $\varphi_1^{n+1}\circ \alpha_{n+1}=-id$. 
The image $\alpha_{n+1}(\boldsymbol{a})$ is also an element in $\ker\delta_{n+1}$
(see (\ref{eqn_subset})). Therefore, we have  
\[
(\varphi_1^{n+1}\circ \alpha_{n+1})(\boldsymbol{a})=-\alpha_{n+1}^{-1}\bigl(\alpha_{n+1}(\boldsymbol{a})\bigr)=-\boldsymbol{a}.
\] 
Next, we show $\alpha_n\circ\varphi_1^n + \varphi_2^{n+1}\circ \delta_n=-id$. The element $\boldsymbol{b}$ is expressed as 
$\boldsymbol{b}=\boldsymbol{b}_1\oplus\boldsymbol{b}_2$ where $\boldsymbol{b}_1\in K_n$ and $\boldsymbol{b}_2\in \ker\delta_n$. 
Then, 
\[(\alpha_n\circ\varphi_1^n)(\boldsymbol{b})=\alpha_n\bigl(-\alpha_n^{-1}(\boldsymbol{b}_2)\bigr)=-\boldsymbol{b}_2
\] 
and moreover, 
\[
(\varphi_2^{n+1}\circ \delta_n)(\boldsymbol{b})=-\delta_{n}^{-1}\bigl(\delta_{n}(\boldsymbol{b}_1)\bigr)=-\boldsymbol{b}_1.
\] 
Accordingly, we have $(\alpha_n\circ\varphi_1^n + \varphi_2^{n+1}\circ \delta_n)(\boldsymbol{b})=-\boldsymbol{b}$ for $\boldsymbol{b}\in G_n$. 

Lastly, we have $(\delta_{n-1}\circ \varphi_2^n)(\boldsymbol{c})=\delta_{n-1}\bigl(-\delta_{n-1}^{-1}(\boldsymbol{c})\bigr)=-\boldsymbol{c}$ 
for $\boldsymbol{c}\in {\rm Im}\,\tilde{d}^F_{n-1}$.

From the above computation, we get $d_{n-1}^Z \circ\Psi^{n} + \Psi^{n+1}\circ d_{n}^Z=-id_Z$ which shows that ${\rm Cone}(\alpha)$ is null-homotopic. 
\end{proof}

Combining Lemma \ref{lem1} and Lemma \ref{lem2}, we get the following result.
\medskip 

\begin{theorem}\label{maintheo}
Let $F=(F_\bullet,\,\tilde{d}_\bullet^F)$ and $\lambda=(\lambda_\bullet,0)$ be objects in $\mathcal{C}({}_R\mathcal{M})$. 
Let $G_n$ be the submodule of $F_n$ given by 
$F_n = G_n \oplus {\rm Im} \tilde{d}_{n-1}^F$
and 
$\alpha=\{\alpha_n:\lambda_n \to F_n\}$ a morphism in $\mathcal{C}({}_R\mathcal{M})(\lambda,\,F)$ satisfying ${\rm Im}\,\alpha_n\subset G_n$ at each $n\in \mathbb{Z}$. 
Then, each $\lambda_n$ is isomorphic to $H^n(F)$ 
and $\alpha_n$ is an injection into $G_n$,
if and only if ${\rm Cone}(\alpha)\sim 0$. 
\end{theorem}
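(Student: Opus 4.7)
The plan is to observe that this theorem is a direct biconditional assembly of the two preceding lemmas, so no new construction is needed; the proof reduces to citing each lemma in the appropriate direction and checking that the hypotheses line up. In particular, the theorem's standing assumptions (the splitting $F_n = G_n \oplus \mathrm{Im}\,\tilde{d}_{n-1}^F$, the chain map $\alpha$ satisfying $\mathrm{Im}\,\alpha_n\subset G_n$, and the working category $\mathcal{C}({}_R\mathcal{M})$) are exactly the hypotheses under which Lemmas \ref{lem1} and \ref{lem2} were proved.

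First I would handle the ``only if'' direction by invoking Lemma \ref{lem1}: from $\mathrm{Cone}(\alpha)\sim 0$ it gives simultaneously the injectivity of each $\alpha_n$ and the isomorphisms $\lambda_n\simeq H^n(F)$. Because the theorem already assumes $\mathrm{Im}\,\alpha_n\subset G_n$, the injectivity produced by Lemma \ref{lem1} automatically promotes to the required statement that $\alpha_n$ is an injection into $G_n$.

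Next I would address the ``if'' direction by invoking Lemma \ref{lem2}: given that each $\lambda_n\simeq H^n(F)$ and that $\alpha_n$ injects into $G_n$, Lemma \ref{lem2} explicitly exhibits the null-homotopy $\{\Phi^n\}$ built from $\alpha_n^{-1}$ on $\ker\delta_n$ and $(\delta_{n-1}|_{K_{n-1}})^{-1}$ on $\mathrm{Im}\,\tilde{d}_{n-1}^F$, yielding $\mathrm{Cone}(\alpha)\sim 0$.

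Since both implications are fully handled by the two lemmas, there is essentially no obstacle at this stage; the only thing to verify is the bookkeeping match between the theorem's hypotheses and the lemmas' hypotheses, which is immediate. The main conceptual work (the rank-nullity argument extracting injectivity and the isomorphism $\lambda_n\simeq\ker\delta_n\simeq H^n(F)$ from the null-homotopy, and conversely the explicit construction of $\Phi^n$) has already been carried out, so the proof of Theorem \ref{maintheo} is a one-line combination.
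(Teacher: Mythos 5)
Your proposal matches the paper's treatment exactly: the paper states Theorem \ref{maintheo} immediately after the sentence ``Combining Lemma \ref{lem1} and Lemma \ref{lem2}, we get the following result,'' with no further argument, and your two-directional citation of Lemma \ref{lem1} (for the ``only if'' direction) and Lemma \ref{lem2} (for the ``if'' direction) under the shared standing hypotheses is precisely that combination.
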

\bigskip

Now, 
we prove Main Theorem \ref{thm_main},
by combining Theorem \ref{maintheo} with Proposition \ref{prop1}.
\begin{proof}
Proposition \ref{prop1} shows that  if ${\rm Cone}(\alpha )\otimes V \sim 0$ then $\lambda$
is a categorified eigenvalue.
According to the supposition  of the theorem,
the eigenobject $V$ is $R$, then
it is trivially ${\rm Cone}(\alpha )\otimes R = {\rm Cone}(\alpha )\sim 0$  
since ${}_R\mathcal{M}$ is the category of modules over $R$.
In this case  each $\lambda_n$  is isomorphic to the cohomology $H^n (F)$ from Theorem \ref{maintheo}.

Conversely, if each $\lambda_n$  is isomorphic to the cohomology $H^n (F)$ 
and each $\alpha_n$ is an injection into $G_n$,
then ${\rm Cone}(\alpha )\otimes R \sim 0$ from Theorem \ref{maintheo}.
Proposition \ref{prop1} shows that 
$R$ and $\lambda$ are
an eigenobject and
 a categorified eigenvalue, respectively.
\end{proof}

\begin{corollary}
An arbitrary sequence of (co)homology groups $H^\bullet (F_\bullet,\,\tilde{d}_\bullet^F)$ 
with coefficients in $R$ 
is a categorified eigenvalue of $F$ with its eigenobject $R$.
\end{corollary}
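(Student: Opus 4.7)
The corollary is an immediate consequence of Main Theorem \ref{thm_main}, applied to a canonically chosen chain map. The plan is first to exhibit, for any chain complex $F=(F_\bullet,\tilde{d}^F_\bullet)$, a scalar object $\lambda$ and a chain map $\alpha:\lambda\to F$ that together satisfy the hypotheses of the theorem with $\lambda_n\simeq H^n(F)$ and $\alpha_n$ an injection into $G_n$.

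I would take $\lambda$ to be $(H^\bullet(F),0)$, the sequence of cohomology groups equipped with identically zero differentials. To define $\alpha$, I would exploit equation \eqref{eqn_kernel}, $\ker\tilde{d}^F_n=\ker\delta_n\oplus {\rm Im}\,\tilde{d}^F_{n-1}$, which implies that the canonical projection $\ker\tilde{d}^F_n\twoheadrightarrow H^n(F)$ restricts to an isomorphism $\ker\delta_n\xrightarrow{\sim}H^n(F)$. Setting $\alpha_n$ to be the inverse of this restriction composed with the inclusions $\ker\delta_n\hookrightarrow G_n\hookrightarrow F_n$, one obtains an injective $R$-homomorphism with image contained in $G_n$. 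Since ${\rm Im}\,\alpha_n\subset\ker\tilde{d}^F_n$, the chain map condition $\tilde{d}^F_n\circ\alpha_n=0$ holds automatically, so $\alpha$ is a morphism in $\mathcal{C}({}_R\mathcal{M})(\lambda,F)$ of the form required by Theorem \ref{maintheo}.

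By construction $\lambda_n\simeq H^n(F)$ and $\alpha_n$ is an injection into $G_n$, so Theorem \ref{maintheo} directly yields ${\rm Cone}(\alpha)\sim 0$. Since tensoring with $R$ in $\mathcal{C}({}_R\mathcal{M})$ is the identity, ${\rm Cone}(\alpha)\otimes R\sim 0$, and Proposition \ref{prop1} (equivalently, the converse direction of Main Theorem \ref{thm_main}) gives $\lambda\otimes R\sim F\otimes R$. This is precisely the claim that $H^\bullet(F)$ is a categorified eigenvalue of $F$ with eigenobject $R$.

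The main obstacle, as I see it, is not the algebraic manipulation, which is essentially packaged into Theorem \ref{maintheo}, but the implicit requirement that each $H^n(F)$ actually lies in ${}_R\mathcal{M}$, i.e. is finitely generated and free over $R$. This is not automatic for an arbitrary commutative ring of characteristic zero; the corollary should therefore be read under the tacit hypothesis, consistent with the standing assumptions of the paper, that the $H^n(F)$ are finitely generated free $R$-modules, as is the case, for instance, when $R$ is a field or when the differentials of $F$ split appropriately so that $\ker\delta_n$ is a free direct summand of $F_n$.
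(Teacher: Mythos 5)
Your proposal is correct and follows essentially the same route as the paper's one-line proof, which likewise constructs the injection $\alpha_n:H^n(F)\to G_n$ by choosing representatives and then invokes Main Theorem \ref{thm_main}; you merely make that choice precise via the isomorphism $\ker\delta_n\simeq H^n(F)$ coming from the splitting \eqref{eqn_kernel}. Your closing caveat---that each $H^n(F)$ must itself be finitely generated free for $\lambda$ to lie in $\mathcal{C}({}_R\mathcal{M})$---is a legitimate observation that the paper leaves implicit in its standing assumption that the decompositions $F_n=G_n\oplus{\rm Im}\,\tilde{d}^F_{n-1}$ (and hence $G_n=K_n\oplus\ker\delta_n$) are realized within ${}_R\mathcal{M}$.
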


\begin{proof}
We can construct an injection $\alpha_n : H^n(F) \to G_{n}$
to assign a representative $x$ in $F_n$ to each element 
$[x]$ in $H^n(F)$.
Therefore,
it follows from Main Theorem \ref{thm_main} that 
$H^\bullet (F)$ is a categorified eigenvalue of $F$.
\end{proof}

\medskip 

\begin{rem} 
Similarly, a proposition that reverses subscripts of the (co)chain complex also holds.
\end{rem}

We give a description of Theorem \ref{maintheo} by terminologies of homology to avoid confusion caused by the difference of indexes, in the matrix representation version
Lemma \ref{App_lem1} in Appendix \label{A}. 
\begin{corollary}\label{cor1} 
Let $\mathcal{C}({}_R\mathcal{M})$ be a category of chain complexes of 
finitely generated free modules over $R$.
Fix a chain complex $F \in ob(\mathcal{C}({}_R\mathcal{M}))$ with boundary operators $\tilde{\partial } _n$ as 
\begin{align*}
F=( \cdots \rightarrow  C_{n+1} \xrightarrow{\tilde{\partial}_{n+1}} C_n  \xrightarrow{\tilde{\partial}_n} C_{n-1}  \rightarrow \cdots ), \quad \tilde{\partial}_{n}\tilde{\partial}_{n+1}=0.
\end{align*}
We suppose that
a scalar object $\lambda \in ob(\mathcal{C}({}_R\mathcal{M}))$ is given by
\begin{align*}
\lambda&=(\cdots \rightarrow  \lambda_{n+1} \xrightarrow{~0~} \lambda_n \xrightarrow{~0~} \lambda_{n-1} \rightarrow \cdots).
\end{align*}
Under the direct decomposition of 
\begin{align*}
C_n=
({\rm Im}~\tilde{\partial }_{n+1})^c
\oplus {\rm Im}~\tilde{\partial }_{n+1}, 
\end{align*}
we consider a morphism $\alpha:=\{\alpha_n:\lambda_n\to ({\rm Im}~\tilde{\partial } _{n+1})^c\subset C_n\}\in \mathcal{C}({}_R\mathcal{M})(\lambda,F)$ and $Z:={\rm Cone}(\alpha)=\lambda[-1]\oplus F$ with morphisms $\partial_n^Z:Z_n\to Z_{n-1}$ such that 
\begin{align*}
\partial_n^Z=\left[
\begin{array}{ccc}
0&0&0\\
\alpha_{n-1}&0&0\\
0&\partial_n&0
\end{array}\right]:
\left[\begin{array}{c}
\lambda_{n-1}\\
({\rm Im}~ \tilde{\partial }_{n+1})^c\\
{\rm Im}~ \tilde{\partial }_{n+1}
\end{array}\right]\to
\left[\begin{array}{c}
\lambda_{n-2}\\
({\rm Im}~ \tilde{\partial }_{n})^c\\
{\rm Im}~ \tilde{\partial }_{n}
\end{array}\right],
\end{align*}
where $\partial_n:({\rm Im}~ \tilde{\partial}_{n+1})^c \to {\rm Im}~ \tilde{\partial}_{n}$, and $\alpha_n$ is a map from $\lambda_n$ to $\ker \tilde{\partial}_{n}~\cap~ ({\rm Im}~\tilde{\partial}_{n+1})^c$. A condition ${\rm Cone}(\alpha)\sim 0$ is satisfied if and only if each $\lambda_n$ is isomorphic to 
a homology group
\begin{align*}
\lambda_n \simeq 
 H_n (F),
\end{align*}
and each $\alpha_n$ is an injection.
\end{corollary}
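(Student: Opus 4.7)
The plan is to deduce Corollary \ref{cor1} directly from Theorem \ref{maintheo} by the index-reversal announced in the preceding Remark, rather than repeating the matrix calculations of Lemmas \ref{lem1} and \ref{lem2}. The strategy is to translate the chain-complex data of the corollary into the cochain-complex data of the theorem via the relabeling $n \leftrightarrow -n$, invoke the theorem, and translate back.

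Concretely, I would introduce a cochain complex $F'$ dual to $F$ by setting $F'_n := C_{-n}$ with cochain differentials $\tilde{d}^{F'}_n := \tilde{\partial}_{-n}: F'_n \to F'_{n+1}$, together with the relabeled scalar object $\lambda'_n := \lambda_{-n}$ (still with zero differentials) and morphism $\alpha'_n := \alpha_{-n}$. Then $F'$, $\lambda'$, and $\alpha'$ satisfy the hypotheses of Theorem \ref{maintheo}: the given decomposition $C_n = (\mathrm{Im}\,\tilde{\partial}_{n+1})^c \oplus \mathrm{Im}\,\tilde{\partial}_{n+1}$ translates to $F'_{-n} = G'_{-n} \oplus \mathrm{Im}\,\tilde{d}^{F'}_{-n-1}$ with $G'_{-n} := (\mathrm{Im}\,\tilde{\partial}_{n+1})^c$, and $\mathrm{Im}\,\alpha'_n \subset G'_n$ is immediate from the assumption that $\alpha_n$ lands in $(\mathrm{Im}\,\tilde{\partial}_{n+1})^c$.

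Next I would verify that the two mapping-cone conventions agree under this relabeling: the shift $\lambda[-1]$ in the chain setup and $\lambda'[1]$ in the cochain setup describe the same object once $n$ is replaced by $-n$, and the matrix entries for $\partial^Z_n$ in the corollary correspond exactly to those of $d^{Z'}_{-n-1}$ from the theorem. Consequently $\mathrm{Cone}(\alpha) \sim 0$ if and only if $\mathrm{Cone}(\alpha') \sim 0$. Applying Theorem \ref{maintheo} to $(F',\lambda',\alpha')$ then yields the equivalence with $\lambda'_n \simeq H^n(F')$ and $\alpha'_n$ injective into $G'_n$ for all $n$. Since
\[
H^n(F') = \frac{\ker \tilde{d}^{F'}_n}{\mathrm{Im}\,\tilde{d}^{F'}_{n-1}} = \frac{\ker \tilde{\partial}_{-n}}{\mathrm{Im}\,\tilde{\partial}_{-n+1}} = H_{-n}(F),
\]
substituting $n \mapsto -n$ recovers the statement $\lambda_n \simeq H_n(F)$ with $\alpha_n$ an injection into $(\mathrm{Im}\,\tilde{\partial}_{n+1})^c$.

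The main obstacle is purely bookkeeping: confirming that the shift conventions $[1]$ versus $[-1]$, and the placement of $\alpha$ and of the restricted differential in the $3\times 3$ matrix forms, line up correctly under the index reversal so that the two mapping cones are literally the same complex up to relabeling. Once this translation dictionary is written down carefully, no new homological-algebra content is needed and the corollary follows from Theorem \ref{maintheo} at once.
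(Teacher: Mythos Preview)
Your proposal is correct and matches the paper's approach: the paper does not give a separate proof of Corollary~\ref{cor1} but presents it as the homology-indexed restatement of Theorem~\ref{maintheo}, noting in the preceding Remark that ``a proposition that reverses subscripts of the (co)chain complex also holds.'' Your careful $n\leftrightarrow -n$ dictionary is exactly this reversal spelled out in detail.
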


\begin{example}
Fix a chain complex $F\in ob(\mathcal{C}({}_R\mathcal{M}))$ as a chain complex of $S^1$ with coefficients in $\mathbb Z$:
\begin{align*}
 F= \left( 0  \longrightarrow C_1 (S^1) \xrightarrow{\mbox{} \ \tilde{\partial}_{1}\ \mbox{}} C_0 (S^1)\longrightarrow 0 \right).
\end{align*}
Let $\lambda\in ob(\mathcal{C}({}_R\mathcal{M}))$ be a sequence of 
homology groups given  as
\begin{align*}
\lambda &=\left (0\longrightarrow H_1(S^1)\longrightarrow H_0(S^1)\longrightarrow 0\right)\\
&=\left (0\longrightarrow \mathbb{Z}\xrightarrow{~0~} \mathbb{Z}\longrightarrow 0\right).
\end{align*}
Suppose that a basis of $C_1(S^1)$ is given by three edges $[AB],\,[BC]$ and $[CA]$ of a triangle $\Delta ABC$ and 
a basis of $C_0(S^1)$ is chosen as $[A]$ in $({\rm Im}~ \tilde{\partial }_{1})^c$, ~$[B]-[A]$ and $[C]-[B]$ in ${\rm Im}~ \tilde{\partial }_{1}$. We show that ${\rm Cone}(\alpha )$ and $0$ are homotopy equivalent for a morphism $\alpha:=\{\alpha_n:H_n(S^1)\to C_n(S^1)\}\in \mathcal{C}({}_R\mathcal{M})(\lambda, F)$ where 
\begin{align*}
\begin{array}{rccl}
\alpha_1: &  n & \mapsto & {}^t(n,n,n)=n[AB]+n[BC]+n[CA], \\
\alpha_0: &  n & \mapsto & {}^t(n,0,0)=n[A].
\end{array}
\end{align*}

\begin{align*}
\xymatrix{
0 \ar[r]  & \mathbb{Z}   \ar@{-}[d] \ar[r]^-{ \partial_2^Z }  & \mathbb{Z} \oplus C_1 (S^1) \ar[r]^-{\partial_1^Z} \ar@{-}[d] \ar[ldd]^{\Psi^1} & C_0 (S^1)  \ar[r] \ar@{-}[d]  \ar[ldd]^{\Psi^0} & 0 \\
0 \ar[r]  & 0 \ar[r] \ar[d]^{id}  & 0 \ar[r] \ar[d]^{id}   & 0 \ar[r] \ar[d]^{id}    & 0 \\
0 \ar[r] & \mathbb{Z}   \ar[r]_-{ \partial^Z_2} &  \mathbb{Z} \oplus C_1 (S^1)  \ar[r]_-{\partial_1^Z} & C_0 (S^1)  \ar[r] & 0
}
\end{align*}
 Then morphisms and homotopies are given by
\begin{align*}
\begin{array}{rccl}
\partial_2^Z \colon &  n & \mapsto & {}^t( 0,n,n,n), \\
 \partial_1^Z \colon & {}^t( k , l , m , n) &  \mapsto & {}^t(k,l-n,m-n), \\
\Psi^1 \colon & {}^t( k,l,m,n) &  \mapsto & -n,  \\
\Psi^0 \colon &  {}^t(l,m,n) & \mapsto &  {}^t( -l, -m , -n,0)
 \end{array}
\end{align*}
for $k,l,m,n\in \mathbb{Z}$, that is,
\begin{align*}
\partial_2^Z  ={}^t\left[ 0 , 1 ,1, 1 \right] &,\quad
 \partial_1^Z= \left[
 \begin{array}{cccc}
 1&0 & 0 & 0 \\
  0&1 & 0 & -1  \\
 0&0 & 1 & -1  
 \end{array}
 \right] , \\
 \Psi^1 = \left[ 0,0,0,-1 \right] &,\quad
 \Psi^0 =
 \left[
 \begin{array}{ccc}
 -1   & 0 & 0 \\
 0  & -1  & 0 \\
  0&0&-1\\
 0  &  0 & 0 
  \end{array}
 \right].
 \end{align*}
The homotopy equivalent conditions are checked from the following calculation. 
\begin{align*}
 \Psi^{1} \circ  \partial_2^Z  &=-1,\\
 \Psi^0 \circ  \partial_1^Z  +  \partial_2^Z\circ \Psi^1&=\left[ 
 \begin{array}{cccc}
 -1 & 0 & 0 & 0 \\
 0 & -1 & 0 & 0 \\
 0 & 0 & -1 & 0 \\
 0 & 0& 0& -1
 \end{array}
 \right] ,\\
   \partial_1^Z \circ \Psi^{0} &=\left[ 
 \begin{array}{ccc}
 -1 & 0 & 0 \\
 0 & -1 & 0 \\
 0 & 0 & -1 
 \end{array}
 \right].
\end{align*}
Therefore, $Cone (\alpha) \sim 0$ is satisfied.
We can regard the sequence of homology groups  $\lambda$ as the categorified eigenvalue of the chain complex of $S^1$.

\end{example}



\section*{Acknowledgements}
\noindent 
A.S.\ was supported by JSPS KAKENHI Grant Number 16K05138.

\appendix
\section{Appendix} \label{A}
In this appendix, explicit conditions of homotopy $\Psi$ in Section \ref{sect3}
are studied.
We use the same notation in the proof of Lemma \ref{lem1}.

\bigskip


${\rm Cone}(\alpha)\sim 0$ means that there exist homotopy $\Psi^n$ for all $n$.

To satisfy $(\ref{eq.1})$ and $(\ref{eq.3})$, for  
$G_{n+1} = 
{\rm Im}~\alpha_{n+1} \oplus ({\rm Im}~\alpha_{n+1})^c
$ in $
F_n=G_n \oplus {\rm Im}~\tilde{d}_{n-1}^F 
$, we put
\begin{align*}
\psi^{n+1}_{12}&=\begin{cases}
-\alpha_{n+1}^{-1}& {\rm for}~{\rm Im}~\alpha_{n+1}\\
g_{n+1}& {\rm for}~({\rm Im}~\alpha_{n+1})^c
\end{cases}, \ &: G_{n+1} \rightarrow  \lambda_n\\
\psi_{23}^{n}&=-(\delta_{n-1})^{-1}+c_{n-1}, &: {\rm Im}~ \tilde{d}_{n-1}^F \rightarrow 
G_{n-1}
\end{align*}
where 
an isomorphism $\alpha_{n+1}^{-1}$ 
from ${\rm Im}~\alpha_{n+1}$ to $\lambda_n$ and
an isomorphism 
$(\delta_{n-1})^{-1}$
from ${\rm Im}~ \tilde{d}_{n-1}^F \simeq {\rm Im}~ \delta_{n-1}$
to $G_{n-1} -\ker \delta_{n-1}$
are fixed, respectively.

From $(\ref{eq.3})$, a condition
\begin{align}
\delta_{n-1} \circ c_{n-1} = 0,
\quad 
{\rm Im}~c_{n-1}\subset \ker \delta_{n-1} \simeq
\ker \tilde{d}_{n-1}^F\slash {\rm Im}~\tilde{d}_{n-2}^F
\label{A_1_p_0}
\end{align}
is obtained.
From $(\ref{eq.2})$, 
$
g_{n}:({\rm Im}~\alpha_{n})^c \to\lambda_{n},
$ and $c_n : {\rm Im}~ \tilde{d}_{n}^F \rightarrow G_{n-1}$
satisfy
\begin{align}
c_n \circ \delta_n -id_{{\rm Im}~\alpha_n}=-id_{\ker \delta_n} \qquad
& {\rm for}~{\rm Im}~\alpha_{n}
\label{A_1_p_1}
\\
c_n \circ \delta_n + \alpha_n \circ g_n=-id_{\ker \delta_n} \oplus 
0_{ K_n }
  \qquad
& {\rm for}~({\rm Im}~\alpha_{n})^c .
\label{A_1_p_2}
\end{align}
Here, we decompose 
$G_n$
into $\ker \delta_n \oplus K_n$,
in other words
\begin{align*}
&G_n
= A \oplus B \oplus C \oplus D ,\\
&A= \ker \delta_n \cap ({\rm Im}~\alpha_{n})^c , \qquad
B=  K_n  \cap ({\rm Im}~\alpha_{n})^c ,
\\
&C= \ker \delta_n \cap {\rm Im}~\alpha_{n} ,  \qquad
D= K_n \cap {\rm Im}~\alpha_{n} .
\end{align*}
Recall that 
\begin{align*}
{\rm Im}~\alpha_{n} \subset \ker \delta_n 
\end{align*}
since $\alpha$ is a chain map or
$ d_Z \circ d_Z =0$. Then 
\begin{align}
D= K_n \cap {\rm Im}~\alpha_{n} = \{ \boldsymbol{0}  \},
\label{A_p_1_2.5}
\end{align}
and
(\ref{A_1_p_1}) is equivalent to
\begin{align}
c_n \circ \delta_n |_{{\rm Im}\alpha_n} 
=  0_C , \qquad C =  {\rm Im}~\alpha_n
\label{A_1_p_3}
\end{align}
for  ${\rm Im}~\alpha_n$.

Next, let us consider the condition (\ref{A_1_p_2}),
that is equivalent to 
\begin{align}
(c_n \circ \delta_n + \alpha_n \circ g_n )|_{A\oplus B} = 
-id_{A} \oplus 
0_{B}. \label{A_1_p_4}
\end{align}
For arbitrary $\boldsymbol{v} \in A$,
the L.H.S. of (\ref{A_1_p_4}) becomes
\begin{align*}
(c_n \circ \delta_n + \alpha_n \circ g_n )|_{A} (\boldsymbol{v})=
(\alpha_n \circ g_n )|_{A} (\boldsymbol{v})  \ \in {\rm Im}~\alpha_n .
\end{align*}
On the other side, R.H.S. of (\ref{A_1_p_4})  becomes
\begin{align*}
(-id_{A} \oplus 0_{B} ) (\boldsymbol{v}) = - \boldsymbol{v} ,
\end{align*}
since $v \in A$. 
By definition of $A$, $A \cap  {\rm Im}~\alpha_n = \{ \boldsymbol{0} \}$,
therefore it turns out that
\begin{align}
A= \ker \delta_n \cap ({\rm Im}~\alpha_{n})^c = \{ \boldsymbol{0} \}.
\label{A_1_p_5}
\end{align}
For arbitrary $\boldsymbol{w} \in B$, from (\ref{A_1_p_4}) we obtain
\begin{align}
(c_n \circ \delta_n + \alpha_n \circ g_n )|_{B} (\boldsymbol{w}) = \boldsymbol{0}
\end{align}
because $ (-id_{A} \oplus 
0_{B}) (\boldsymbol{w}) = \boldsymbol{0}$.
From (\ref{A_p_1_2.5}) and (\ref{A_1_p_5}),
we find 
\begin{align}
{\rm Im}~\alpha_n = \ker \delta_n , \qquad
 K_n  = ({\rm Im}~\alpha_{n})^c
\label{A_1_p_6}
\end{align}
and 
\begin{align}
G_n
= B \oplus C ,
\end{align}
where
$B= K_n= (\rm{Im} \alpha_n )^c$ and
$C= \rm{Im} \alpha_n = \ker \delta_n$.
Finally, the equation 
\begin{align*}
\psi_{12}^{n+1} \circ \alpha_{n+1}&=
\alpha_n^{-1} \circ \alpha_n 
=-id_{\lambda _{n}}
\end{align*}
in (\ref{eq.1}) shows that ${\rm Im}\alpha_n \simeq \lambda _{n}$, then
\begin{align}
\lambda_n \simeq {\rm Im}~\alpha_n  =  \ker \delta_n \simeq  
\ker \tilde{d}_n^F \slash {\rm Im} \tilde{d}_{n-1}^F
\end{align}
combining this with (\ref{A_1_p_6}).
This shows Lemma \ref{lem1} again.\\

Lemma \ref{lem1} is 
written in this notation as follows.
\begin{lemma}\label{App_lem1}
Let $\mathcal{C}({}_R\mathcal{M})$ be the category of chain complexes of 
finitely generated free modules over $R$.
Fix a (co)chain complex $F \in \mathcal{C}({}_R\mathcal{M})$ with homomorphisms $\tilde{d} _n^F$ as 
\begin{align*}
F=( \cdots \rightarrow  F_{n-1} \xrightarrow{\tilde{d}_{n-1}^F} F_n  \xrightarrow{\tilde{d}_n^F} F_{n+1}  \rightarrow \cdots ), \quad \tilde{d}_{n}^F\tilde{d}_{n-1}^F=0
\end{align*}
for all $n$. 
We suppose that
a (co)chain complex $\lambda \in ob(\mathcal{C}({}_R\mathcal{M}))$ is given by
\begin{align*}
\lambda&=(\cdots \rightarrow  \lambda_{n-1} \xrightarrow{~0~} \lambda_n \xrightarrow{~0~} \lambda_{n+1} \rightarrow \cdots),
\end{align*}
where $0$ is a trivial homomorphism which maps every (co)chain to zero. For all $n$, under the direct decomposition of 
\begin{align*}
F_n=G_n \oplus {\rm Im}~\tilde{d}_{n-1}^F, 
\end{align*}
we consider a morphism $\alpha:=\{\alpha_n:\lambda_n\to 
G_n
\subset F_n\}\in \mathcal{C}({}_R\mathcal{M})(\lambda,F)$ and $Z:={\rm Cone}(\alpha)=\lambda[1]\oplus F$ with morphisms $\{ d_n^Z:Z_n\to Z_{n+1}\}$ such that 
\begin{align*}
d_n^Z=\left[
\begin{array}{ccc}
0&0&0\\
\alpha_{n+1}&0&0\\
0&\delta_n&0
\end{array}\right]:
\left[\begin{array}{c}
\lambda_{n+1}\\
G_n\\
{\rm Im}~ \tilde{d}_{n-1}^F
\end{array}\right]\to
\left[\begin{array}{c}
\lambda_{n+2}\\
G_{n+1}\\
{\rm Im}~ \tilde{d}_{n}^F
\end{array}\right],
\end{align*}
where $\delta_n :G_n \to {\rm Im}~ \tilde{d}_{n}^F$. If ${\rm Cone}(\alpha)\sim 0$, then 
each $\alpha_n$ is injective, and
$\lambda_n$ and the cohomology group $H^n(F)$ are isomorphic:
\begin{align*}
\lambda _n \simeq \frac{\ker \tilde{d}_{n}^F}{{\rm Im}~\tilde{d}_{n-1}^F}=H^n(F)
\end{align*}
for all $n$.
\end{lemma}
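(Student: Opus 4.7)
The plan is to obtain Lemma \ref{App_lem1} as a direct consequence of the explicit analysis carried out in the paragraphs immediately preceding its statement, which serve as the body of the proof. The statement is logically identical to Lemma \ref{lem1}; only the notation for the direct summand complementary to $\mathrm{Im}\,\tilde d_{n-1}^F$ has been made concrete (it is written $G_n$ both here and there), and the ansatz for the homotopy $\Psi^n$ has been displayed component-by-component. Thus the proof should simply collect what has been derived and package it as an implication.

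First I would start from the hypothesis $\mathrm{Cone}(\alpha)\sim 0$, which by Definition \ref{homotopy equivalent} produces a homotopy $\{\Psi^n:Z_n\to Z_{n-1}\}$ from $0$ to $\mathrm{id}_Z$. Writing $\Psi^n$ in the $3\times 3$ matrix form dictated by the decomposition $Z_n = \lambda_{n+1}\oplus G_n\oplus \mathrm{Im}\,\tilde d_{n-1}^F$, the relation $d_{n-1}^Z\circ\Psi^n + \Psi^{n+1}\circ d_n^Z = -\mathrm{id}$ yields the three diagonal equations (\ref{eq.1}), (\ref{eq.2}), (\ref{eq.3}) already recorded in the proof of Lemma \ref{lem1}. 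Equation (\ref{eq.1}) gives $\psi_{12}^{n+1}\circ\alpha_{n+1} = -\mathrm{id}_{\lambda_{n+1}}$, from which the injectivity of each $\alpha_n$ (and hence $\lambda_n\simeq \mathrm{Im}\,\alpha_n$) follows immediately; this is the first of the two conclusions.

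Next I would plug in the ansatz for $\psi_{12}^{n+1}$ and $\psi_{23}^n$ displayed above the statement: $\psi_{12}^{n+1}$ is taken to be $-\alpha_{n+1}^{-1}$ on $\mathrm{Im}\,\alpha_{n+1}$ and an arbitrary $g_{n+1}$ on $(\mathrm{Im}\,\alpha_{n+1})^c$, while $\psi_{23}^n = -(\delta_{n-1})^{-1} + c_{n-1}$ with $\mathrm{Im}\,c_{n-1}\subset\ker\delta_{n-1}$ (forced by (\ref{A_1_p_0})). Feeding these into (\ref{eq.2}) and using the fine decomposition $G_n = A\oplus B\oplus C\oplus D$ with $A,B,C,D$ as defined in the text, one reads off (\ref{A_1_p_3}) on $C=\mathrm{Im}\,\alpha_n\cap\ker\delta_n$ and (\ref{A_1_p_4}) on $A\oplus B$. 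The forward chain map condition $d_Z\circ d_Z=0$ already forces $\mathrm{Im}\,\alpha_n\subset \ker\delta_n$, hence $D=\{0\}$ by (\ref{A_p_1_2.5}); and testing (\ref{A_1_p_4}) on a vector $\boldsymbol v\in A$ shows its image lies simultaneously in $\mathrm{Im}\,\alpha_n$ and in $A=\ker\delta_n\cap(\mathrm{Im}\,\alpha_n)^c$, forcing $A=\{0\}$ as in (\ref{A_1_p_5}). Together these give $\mathrm{Im}\,\alpha_n = \ker\delta_n$, which is (\ref{A_1_p_6}).

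Finally I would combine the above with (\ref{eqn_kernel}) (valid without the hypothesis $\mathrm{Cone}(\alpha)\sim 0$) and the first isomorphism theorem: $\ker\delta_n\simeq \ker\tilde d_n^F/\mathrm{Im}\,\tilde d_{n-1}^F = H^n(F)$. Chaining the isomorphisms $\lambda_n\simeq \mathrm{Im}\,\alpha_n = \ker\delta_n\simeq H^n(F)$ completes the proof. The only mildly delicate point, and what I would present most carefully, is the case analysis on $A,B,C,D$ in (\ref{A_1_p_4}); the rest is bookkeeping that has already been organized in the preceding paragraphs, so the entire proof can be written as a brief appeal to those paragraphs and to Lemma \ref{lem1}.
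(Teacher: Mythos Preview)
Your proposal is correct and follows exactly the paper's approach: the paragraphs preceding Lemma~\ref{App_lem1} in the appendix constitute its proof, and you have accurately summarized that argument (the $A\oplus B\oplus C\oplus D$ decomposition of $G_n$, the vanishing of $A$ and $D$, and the resulting chain $\lambda_n\simeq\mathrm{Im}\,\alpha_n=\ker\delta_n\simeq H^n(F)$). One small wording caution: the displayed forms of $\psi_{12}^{n+1}$ and $\psi_{23}^n$ are not an \emph{ansatz} you impose but the general form any homotopy component satisfying (\ref{eq.1}) and (\ref{eq.3}) must take---your parenthetical ``forced by (\ref{A_1_p_0})'' shows you understand this, so just phrase it as a deduction rather than a choice.
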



\end{document}